\numberwithin{figure}{section} 
\numberwithin{equation}{section}
\newtheorem{theorem}{\textbf{Theorem}}
\numberwithin{theorem}{section}
\newtheorem{lemma}[theorem]{\textbf{Lemma}}
\theoremstyle{definition}
\newtheorem{definition}[theorem]{Definition}
\newtheorem{remark}{Remark}[section]
\newcommand{\bea}{\begin{eqnarray}} 
\newcommand{\eea}{\end{eqnarray}} 
\newcommand{\be}{\begin{equation}} 
\newcommand{\ee}{\end{equation}} 
\newcommand{\benn}{\begin{equation*}} 
\newcommand{\eenn}{\end{equation*}}
\title[short title]{Towards a simple characterization of the Chern-Schwartz-MacPherson class}
\author{James Fullwood and Dongxu Wang}
\address{Department of Mathematics\\ University of Hong Kong\\ Pokfulam Road, Hong Kong.}
\email{fullwood@maths.hku.hk}
\address{Department of Mathematics\\ Dongbei University of Finance and Economics\\ 217 Jianshan St, Shahekou District, Dalian, Liaoning, China.}
\email{dxwang1981@gmail.com}
\begin{document}

\maketitle

\begin{abstract}
For a large class of possibly singular complete intersections we prove a formula for their Chern-Schwartz-MacPherson classes in terms of a single blowup along a scheme supported on the singular loci of such varieties. In the hypersurface case our formula recovers a formula of Aluffi proven in 1996. As our formula is in no way tailored to the complete intersection hypothesis, we conjecture that it holds for all closed subschemes of a smooth variety. If in fact true, such a formula would provide a simple characterization of the Chern-Schwartz-MacPherson class which does not depend on a resolution of singularities. We also show that our formula may be suitably interpreted as the Chern-Fulton class of a scheme-like object which we refer to as an `$\mathfrak{f}$-scheme'. 
\end{abstract}

\section{Introduction}
Influenced by ideas of Grothendieck, in the 1960s Deligne conjectured the existence of a natural transformation
\[
c_*:\mathscr{C}\to H_*,
\]
where $\mathscr{C}$ is the covariant constructible function functor and $H_*$ is the integral homology functor, such that for a smooth complex algebraic variety $X$
\[
c_*(\mathbbm{1}_X)=c(TX)\cap [X] \in H_*X,
\] 
where $\mathbbm{1}_X$ denotes the indicator function of $X$ and $c(TX)\cap [X]$ denotes the total homological Chern class of $X$. Such a natural transformation is necessarily unique, and the existence of such would deem the class $c_*(\mathbbm{1}_X)$ for singular $X$ a natural extension of Chern class to the realm of singular varieties. Moreover, functoriality would imply
\[
\int_X c_*(\mathbbm{1}_X)=\chi(X),
\]    
where the integral sign is notation for taking the degree-zero component of a homology class and $\chi(X)$ denotes topological Euler characteristic with compact support, thus generalizing the Gau{\ss}-Bonnet-Chern theorem to the singular setting. In 1974 MacPherson explicitly constructed such a $c_*$, thus proving Deligne's conjecture \cite{RMCC}. Then in 1981, Brasselet and Schwartz showed that the class $c_*(\mathbbm{1}_X)$ was the Alexander-dual in relative cohomology of a class constructed by Schwartz in the 1960s using radial vector fields \cite{BSCC}, thus MacPherson's Chern class $c_*(\mathbbm{1}_X)$ eventually became known as the \emph{Chern-Schwartz-MacPherson class} -- or \emph{CSM class} for short -- which from here on will be denoted $c_{\text{SM}}(X)$. Kennedy then generalized MacPherson's construction to hold over an arbitrary algebraically closed field of characteristic zero, for which the integral homology functor is promoted to the Chow group functor $A_*$ \cite{KCC}, and this is the context in which we will work throughout. 

While the functorial CSM classes occupy a central role in the study of characteristic classes of singular varieties, MacPherson's construction of $c_*$ involves such machinery as the graph construction, local Euler obstructions and Nash blowups, thus rendering them quite difficult to define, let alone compute. As such, simple characterizations of the CSM class are quite desirable for anyone interested in their study. Aluffi has given two characterizations for a general variety over an algebraically closed field of characteristic zero in terms of a resolution of singularities \cite{ADF}\cite{ALCG}, and also a characterization for a subvariety of projective space in terms of its general linear sections \cite{AE}. Our aim here is then to suggest a characterization of the CSM class which does not depend on resolution of singularities, not only for the sake of simplicity, but in the hopes that such a characterization may be used to generalize the CSM class to fields of positive characteristic.

We take as our starting point a formula proven by Aluffi in the hypersurface case, which depends on the notion of the \emph{singular scheme} of a variety. For $X$ a subvariety of a smooth variety $M$ (over an algebraically closed field of characteristic zero), let $\mathcal{J}_X$ be the subscheme of $X$ whose ideal sheaf is locally generated by the $m\times m$ minors of the matrix valued function
\[
(a_{ij})=\frac{\partial F_i}{\partial x_j},
\]
where $F_i=0$ are the defining equations for $X$ and $m$ is the codimension of $X$ in $M$. We refer to $\mathcal{J}_X$ as the singular scheme of $X$, as it is an intrinsic subscheme of $X$ supported on its singular locus. In \cite{ASCC}, Aluffi proves that if $X$ is hypersurface then  
\begin{equation}\label{mf}
c_{\text{SM}}(X)=c(TM)\cap p_*\left(\frac{\widetilde{X}-E}{1+\widetilde{X}-E}\right),
\end{equation}
where $p:\widetilde{M}\to M$ is the blowup of $M$ along $\mathcal{J}_X$, $E$ denotes the class of the exceptional divisor of the blowup, $\widetilde{X}$ denotes the class of $p^{-1}(X)$, $(1+\widetilde{X}-E)^{-1}$ is notation for the inverse Chern class of $\mathscr{O}(\widetilde{X}-E)$ and $p_*$ denotes the proper pushforward of algebraic cycles associated with $p$ as defined in \S1.4 of \cite{IT}\footnote{The LHS of equation (\ref{mf}) actually denotes $\iota_*c_{\text{SM}}(X)$, where $\iota:X\hookrightarrow M$ denotes the natural inclusion.}. As none of the ingredients of the RHS of equation (\ref{mf}) depend on $X$ being a hypersurface, it is natural to surmise equation (\ref{mf}) still holds in higher codimension, but this is not so. Moreover, while many formulas for CSM classes have appeared in the literature in the hypersurface case \cite{PPCC}\cite{MM}\cite{ASCC}, little progress has been made in generalizing such formulas to higher codimension. In particular, in \cite{ASCC} Aluffi states of his formula ``We do not know whether our result is an essential feature of hypersurfaces, or whether a formula similar to (\ref{mf}) may compute (Chern-)Schwartz-MacPherson's class of arbitrary varieties. While this is a natural question, the approach of this paper does not seem well suited to address it....''. 

We conjecture that the key to generalizing formula (\ref{mf}) to higher codimension lies in the simple observation that in the hypersurface case, the blowup of $M$ along $\mathcal{J}_X$ coincides with the blowup of $M$ along the \emph{scheme-theoretic union} $X\cup \mathcal{J}_X$ (i.e., the scheme whose ideal sheaf is the product of the ideal sheaves of $X$ and $\mathcal{J}_X$), and that it is precisely the blowup along $X\cup \mathcal{J}_X$ one should use for the generalization of (\ref{mf}) to higher codimension. We provide evidence for this conjecture by proving it for complete intersections in arbitrary codimension which we refer to as \emph{almost smooth}. If $X$ is a complete intersection in $M$ of codimension $m$ we refer to it as almost smooth if there exist $m$ hypersurfaces $X_1,\ldots, X_m$ in $M$ such that $X=X_1\cap \cdots \cap X_m$ with $X_1\cap \cdots \cap X_{m-1}$ being smooth. All hypersurfaces are vacuously almost smooth. The main result of this note is then given by the following
\begin{theorem}\label{mt}
Let $X$ be an almost smooth complete intersection in a smooth variety $M$ and let $p:\widetilde{M}\to M$ be the blowup of $M$ along $X\cup \mathcal{J}_X$. Then 
\begin{equation}\label{mf1}
c_{\emph{SM}}(X)=c(TM)\cap p_*\left(\frac{\widetilde{X}-E}{1+\widetilde{X}-E}\right),
\end{equation}
where $\widetilde{X}$ and $E$ denote the classes of $p^{-1}(X)$ and the exceptional divisor of the blowup respectively.
\end{theorem}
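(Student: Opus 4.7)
My plan is to reduce to Aluffi's hypersurface formula (\ref{mf}) by exploiting the almost smooth hypothesis. Writing $X = Y \cap X_m$ with $Y := X_1 \cap \cdots \cap X_{m-1}$ smooth, $X$ becomes a Cartier divisor in the smooth variety $Y$, cut out by $F_m|_Y$. Applying (\ref{mf}) to the hypersurface $X \subset Y$ yields
\[
c_{\text{SM}}(X) = c(TY) \cap q_*\!\left(\frac{\overline{X} - E_Y}{1+\overline{X}-E_Y}\right),
\]
where $q\colon \widetilde{Y} \to Y$ is the blowup of $Y$ along the singular scheme $\mathcal{J}_{X/Y}$ of $X$ as a hypersurface in $Y$, $\overline{X} = q^{-1}(X)$, and $E_Y$ is the exceptional divisor of $q$.

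The geometric input that glues this to the desired formula on $M$ is the following compatibility between $q$ and $p$. In local coordinates where $Y = V(x_1,\ldots,x_{m-1})$ and $F_i = x_i$ for $i < m$ (available since $Y$ is smooth of codimension $m-1$), the Jacobian of $(F_1,\ldots,F_m)$ has an $(m-1)\times(m-1)$ identity block, so its $m\times m$ minors reduce to the partials $\partial F_m/\partial x_j$ for $j \geq m$; thus $\mathcal{J}_X|_Y = \mathcal{J}_{X/Y}$ as subschemes of $Y$. Since $X$ is a Cartier divisor in $Y$, the scheme-theoretic union $X \cup \mathcal{J}_X$ has the same blowup as $\mathcal{J}_X$ when restricted to $Y$, so the strict transform of $Y$ under $p$ coincides with $\mathrm{Bl}_{\mathcal{J}_{X/Y}}(Y) = \widetilde{Y}$. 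This produces a commutative square with closed embedding $\iota_{\widetilde{Y}}\colon \widetilde{Y} \hookrightarrow \widetilde{M}$ covering $\iota_Y\colon Y \hookrightarrow M$.

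Using the adjunction $c(TY) = \iota_Y^* c(TM) \cdot c(N_{Y/M})^{-1}$, the projection formula, and $\iota_{Y,*} \circ q_* = p_* \circ \iota_{\widetilde{Y},*}$, I would transform the hypersurface formula above into
\[
\iota_{Y,*}\, c_{\text{SM}}(X) = c(TM) \cap p_*\, \iota_{\widetilde{Y},*}\!\left(q^* c(N_{Y/M})^{-1} \cap \frac{\overline{X}-E_Y}{1+\overline{X}-E_Y} \cap [\widetilde{Y}]\right),
\]
reducing the theorem to the identity
\[
p_*\, \iota_{\widetilde{Y},*}\!\left(q^* c(N_{Y/M})^{-1} \cap \frac{\overline{X}-E_Y}{1+\overline{X}-E_Y} \cap [\widetilde{Y}]\right) = p_*\!\left(\frac{\widetilde{X}-E}{1+\widetilde{X}-E} \cap [\widetilde{M}]\right)
\]
in $A_*(M)$. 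The main obstacle is this final matching. I expect it to follow from the restriction identities $E|_{\widetilde{Y}} = E_Y$ and $\widetilde{X}|_{\widetilde{Y}} = \overline{X}$, established by a local analysis of the blowup charts, together with an excess-intersection-style argument showing that the factor $q^* c(N_{Y/M})^{-1}$ precisely accounts for the codimension-$(m-1)$ embedding $\widetilde{Y}\subset \widetilde{M}$, translating the complete-intersection class on $\widetilde{M}$ into the twisted hypersurface class supported on $\widetilde{Y}$.
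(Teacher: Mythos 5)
Your overall strategy---slice off the smooth part $Y=Z=X_1\cap\cdots\cap X_{m-1}$, apply Aluffi's hypersurface formula (\ref{mf}) to the Cartier divisor $X\subset Y$, and compare the two blowups---is natural, your identification of $\mathcal{J}_X$ (viewed inside $Y$) with the singular scheme of the hypersurface $X\subset Y$ is correct, and the identity you reduce the theorem to is in fact true: it is precisely equation (\ref{e2}) of the paper combined with the $k=-1$ instance of the computation in Lemma~\ref{l1}. The problem is that this final identity \emph{is} the mathematical content of the theorem, and the mechanism you propose for it (restriction identities on the strict transform $\widetilde{Y}$ plus an ``excess-intersection-style argument'') cannot establish it. The class $\frac{\widetilde{X}-E}{1+\widetilde{X}-E}\cap[\widetilde{M}]$ lives on all of $\widetilde{M}$ and is in no way supported on the codimension-$(m-1)$ strict transform $\widetilde{Y}$; proper pushforward does not factor through restriction to a subvariety, and there is no excess-intersection formula converting $p_*$ of a class on $\widetilde{M}$ into $p_*$ of a class on $\widetilde{Y}$ by inserting $q^*c(N_{Y/M})^{-1}$. (A smaller but real inconsistency: if $E$ denotes the full exceptional divisor of the blowup along $X\cup\mathcal{J}_X$, then its restriction to $\widetilde{Y}$ is $\overline{X}+E_Y$, not $E_Y$; the formula only matches Aluffi's when $E$ is read as the inverse image of $\mathcal{J}_X$ alone.)

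To see why something genuinely nontrivial is needed, note that the ``$k\ge 1$'' analogue of your last display is exactly the statement $s(X\cup\mathcal{J}_X^k,M)=c(N_YM)^{-1}\cap s(X\cup\mathcal{J}_X^k,Y)$, obtained by identifying both pushforwards with Segre classes via birational invariance. This comparison of Segre classes across the inclusions $W\subset Y\subset M$ is \emph{false} for general $W$; the paper needs the linear-type property of the ideal of $X\cup\mathcal{J}_X^k$ (available because the embedding factors through the smooth $Y$) and Theorem~2 of \cite{KLI} to get it. And at $k=-1$ neither side computes the Segre class of any actual scheme, so birational invariance gives no direct handle; the paper reaches $k=-1$ by observing that both sides of the $k\ge 1$ identity are polynomial in $k$ (the $k$-dependence enters only through Adams operations) and formally substituting $k=-1$, where the Adams operation becomes the dual. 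Your proof becomes complete only once you supply both of these inputs---at which point you have essentially reconstructed Lemma~\ref{l1} and equation~(\ref{e2}), with Aluffi's theorem on $Y$ replacing the paper's appeal to Theorem~1.1 of \cite{FMC} in the endgame. That last substitution is a legitimate and arguably more self-contained way to finish, but as written the middle of your argument is an assertion, not a proof.
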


The proof of Theorem~\ref{mt} is given in \S\ref{proof}. We note that the moniker almost smooth is not to imply that the singularities of an almost smooth complete intersection are necessarily mild. For example, for a hypersurface in projective space with arbitrary singularities all of its general linear sections are almost smooth complete intersections. The almost smooth assumption on $X$ implies that the ideal sheaf $\mathscr{I}=\mathscr{I}_X\cdot \mathscr{I}_{\mathcal{J}_X}$ of the closed subscheme $X\cup \mathcal{J}_X\hookrightarrow M$ is of \emph{linear type}, which means the canonical surjection
\begin{equation}\label{ilt}
\text{Sym}^d(\mathscr{I})\to \mathscr{I}^d 
\end{equation}
is an isomorphism for all $d$, and this is crucial for our proof of Theorem~\ref{mt}. An algorithm to compute CSM classes of almost smooth complete intersections was developed in \cite{MHP}, for which many explicit examples appeared. In any case, it would be surprising if Theorem~\ref{mt} didn't hold sans the almost smoothness assumption, as formula (\ref{mf1}) is in no way tailored to it.   

We conclude with $\S\ref{fs}$, where -- in the spirit of unification of Chern classes for singular varieties -- we show that formula $(\ref{mf1})$ may be interpreted as the \emph{Chern-Fulton class} of an object we refer to as an $\mathfrak{f}$-\emph{scheme}, which we think of as a scheme with `negatively thickened' components. 

\emph{A word of caution.} For the sake of aesthetics, we will often make an abuse of notation by making no notational distinction between a class and its pushforward and/or pull back by an inclusion map, and the same goes for bundles and their restrictions.

\section{Proof of main theorem}\label{proof}
Before proving Theorem~\ref{mt} we introduce some notation which will streamline our computations. So let $S$ be an algebraic scheme over a field, denote its Chow group by $A_*S$ and denote by $d$ the dimension of the largest component of $S$. For $\alpha=\sum_i\alpha^i\in A_*S$ with $\alpha^i\in A_{(d-i)}S$ we let 
\begin{equation}\label{dn}
\alpha^{\vee}=\sum_i(-1)^i\alpha^i, 
\end{equation}
and refer to it as the \emph{dual} of $\alpha$. We remark that by replacing $-1$ by a positive integer $n$ in formula (\ref{dn}) yields the $n$-th \emph{Adams} of $\alpha$, usually denoted $\alpha^{(n)}$. Thus we may think of $a^{\vee}$ as the `$-1$th Adams' of $\alpha$. For a line bundle $\mathscr{L}\to S$ we let 
\begin{equation}\label{tn}
\alpha \otimes \mathscr{L}=\sum_i\frac{\alpha^i}{c(\mathscr{L})^i},
\end{equation}
and refer to it as $\alpha$ \emph{tensor} $\mathscr{L}$. After identifying $\mathscr{L}$ with its class in the Picard group $\text{Pic}(S)$ it is then straightforward to show the map $\alpha\mapsto \alpha \otimes \mathscr{L}$ defines an action of $\text{Pic}(S)$ on $A_*S$, so that for any other line bundle $\mathscr{M}\to S$ we have
\begin{equation}\label{tf2}
(\alpha \otimes \mathscr{L})\otimes \mathscr{M}=\alpha \otimes (\mathscr{L}\otimes \mathscr{M}).
\end{equation}
This fact is proven in \cite{MFCH}, along with the fact that if $\mathscr{E}$ is a class in the Grothendieck group of vector bundles on $S$ then
\begin{equation}\label{df}
\left(c(\mathscr{E})\cap \alpha\right)^{\vee}=c(\mathscr{E}^{\vee})\cap \alpha^{\vee},
\end{equation}
and
\begin{equation}\label{tf}
\left(c(\mathscr{E})\cap \alpha\right) \otimes \mathscr{L}=\frac{c(\mathscr{E}\otimes \mathscr{L})}{c(\mathscr{L})^r}\cap \left(\alpha \otimes \mathscr{L}\right),
\end{equation} 
where $r\in \mathbb{Z}$ denotes the rank of $\mathscr{E}$.  

We will also need the following
\begin{definition}
Let $Y$ be a closed subscheme of $S$ .The \emph{Segre class} of $Y$ (relative to $S$) is denoted $s(Y,S)$, and is defined as
\begin{equation}\label{scd}
s(Y,S)=\begin{cases} c(N_YS)^{-1}\cap [Y] \quad \quad \quad \text{for $Y$ regularly embedded in $S$} \\ f_*\left(c(N_E\widetilde{S})^{-1}\cap [E]\right) \quad \hspace{1.05cm} \text{otherwise,}\end{cases}  
\end{equation}
where $c(N_YS)$ denotes the Chern class of the normal bundle to $Y$ in $S$ (in the case that $Y$ is regularly embedded), $f:\widetilde{S}\to S$ is the blowup of $S$ along $Y$ with exceptional divisor $E$ and $N_E\widetilde{S}$ denotes the normal bundle to $E$ in $\widetilde{S}$.
\end{definition}

We now recall the assumptions of Theorem~\ref{mt}. So let $M$ be a smooth variety over an algebraically closed field of characteristic zero, and let $X=X_1\cap \cdots \cap X_m$ be a complete intersection of $m$ hypersurfaces in $M$ such that $Z=X_1\cap \cdots \cap X_{m-1}$ is smooth. We denote the blowup of $M$ along $X\cup \mathcal{J}_X$ by $p:\widetilde{M}\to M$, where $\mathcal{J}_X$ denotes the singular scheme of $X$ and $X\cup \mathcal{J}_X$ is the subscheme of $M$ corresponding to the ideal sheaf $\mathscr{I}_X\cdot \mathscr{I}_{\mathcal{J}_X}$, i.e., the product of the ideal sheaves of $X$ and $\mathcal{J}_X$. We first prove the following
\begin{lemma}\label{l1}
Let $k$ be a positive integer and let $X\cup \mathcal{J}_X^k$ be the closed subscheme of $M$ corresponding to the ideal sheaf $\mathscr{I}_X\cdot \mathscr{I}^k_{\mathcal{J}_X}$. Then
\[
s(X\cup \mathcal{J}_X^k,Z)=s(X,Z)+c(\mathscr{O}(X))^{-1}\cap \left(s(\mathcal{J}_X,Z)^{(k)}\otimes_Z \mathscr{O}(X)\right),
\] 
where $\mathscr{O}(X)$ is the line bundle on $Z$ corresponding to the divisor $X$, and $s(\mathcal{J}_X,Z)^{(k)}$ denotes the $k$th-Adams of $s(\mathcal{J}_X,Z)$.
\end{lemma}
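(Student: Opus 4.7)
The plan is to compute both sides of the identity as pushforwards from a single blowup of $Z$, and then reduce the lemma to a purely formal algebraic identity in the Chow group of that blowup.

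First, I would exploit the fact that $X$ is Cartier in $Z$ and that $\mathrm{Bl}_{\mathcal{J}_X^k}Z=\mathrm{Bl}_{\mathcal{J}_X}Z$ to observe that the blowup of $Z$ along $X\cup\mathcal{J}_X^k$ coincides with the blowup $\pi:\widetilde{Z}\to Z$ along $\mathcal{J}_X$. Letting $E$ denote the exceptional divisor of $\pi$, the pullback of $\mathscr{I}_X\cdot\mathscr{I}_{\mathcal{J}_X}^k$ under $\pi$ is $\mathscr{O}(-\pi^*X-kE)$, so the exceptional divisor of the blowup of $Z$ along $X\cup\mathcal{J}_X^k$ is the Cartier divisor $\pi^*X+kE$. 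By definition of the Segre class,
\[
s(X\cup\mathcal{J}_X^k,Z)=\pi_*\!\left(\frac{\pi^*X+kE}{1+\pi^*X+kE}\right).
\]

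Second, I would rewrite the RHS of the lemma in the same blowup language. Since $s(\mathcal{J}_X,Z)=\pi_*(E/(1+E))=\sum_{i\ge1}(-1)^{i-1}\pi_*(E^i)$, with the codimension-$i$ piece $(-1)^{i-1}\pi_*(E^i)\in A_{\dim Z-i}Z$, the definition of the $k$th Adams (taking $d=\dim Z$) gives
\[
s(\mathcal{J}_X,Z)^{(k)}=\sum_{i\ge1}(-1)^{i-1}k^i\pi_*(E^i)=\pi_*\!\left(\frac{kE}{1+kE}\right),
\]
which can also be recognized as $s(\mathcal{J}_X^k,Z)$ computed via the same blowup with exceptional divisor $kE$. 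Next, using the compatibility of the tensor operation with $\pi_*$ (valid because $\pi_*$ preserves the dimensional grading and $(1+\pi^*L)^{-i}$ expands as a sum of pullbacks; this is recorded in \cite{MFCH}), together with the direct expansion
\[
\frac{kE}{1+kE}\otimes\mathscr{O}(\pi^*X)=\sum_{i\ge1}\frac{(-1)^{i-1}(kE)^i}{(1+\pi^*X)^i}=\frac{kE}{1+\pi^*X+kE},
\]
and the projection formula for capping with $c(\mathscr{O}(X))^{-1}=(1+X)^{-1}$, one obtains
\[
c(\mathscr{O}(X))^{-1}\cap\!\left(s(\mathcal{J}_X,Z)^{(k)}\otimes\mathscr{O}(X)\right)=\pi_*\!\left(\frac{kE}{(1+\pi^*X)(1+\pi^*X+kE)}\right).
\]

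Finally, since $X$ is Cartier, $s(X,Z)=X/(1+X)=\pi_*(\pi^*X/(1+\pi^*X))$, so proving the lemma reduces to verifying the formal identity
\[
\frac{\pi^*X}{1+\pi^*X}+\frac{kE}{(1+\pi^*X)(1+\pi^*X+kE)}=\frac{\pi^*X+kE}{1+\pi^*X+kE}
\]
in $A_*\widetilde{Z}$; clearing the common denominator $(1+\pi^*X)(1+\pi^*X+kE)$ reduces both sides to $(\pi^*X+kE)(1+\pi^*X)$. Applying $\pi_*$ then completes the proof.

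The main technical point requiring care is the compatibility of the tensor operation with the proper pushforward $\pi_*$, and, coupled with it, fixing the grading convention for the $k$th Adams so that $s(\mathcal{J}_X,Z)^{(k)}=s(\mathcal{J}_X^k,Z)$ at the level of classes in $A_*Z$ (rather than $A_*\mathcal{J}_X$, where spurious factors of $k^{\mathrm{codim}}$ appear). Once these formal inputs from \cite{MFCH} are in place, the remainder of the argument is a direct algebraic manipulation on $\widetilde{Z}$.
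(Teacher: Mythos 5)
Your proof is correct and takes essentially the same route as the paper: both pass to the blowup $\pi:\widetilde{Z}\to Z$ along $\mathcal{J}_X$, identify $s(X\cup\mathcal{J}_X^k,Z)$ with $\pi_*\bigl((\pi^*X+kE)/(1+\pi^*X+kE)\bigr)$ via birational invariance of Segre classes, and reduce the lemma to the same divisor-class identity, with the paper carrying out the algebra upstairs in Aluffi's $\otimes$-calculus where you clear denominators directly. Your attention to the grading convention for the Adams operation and to the compatibility of $\otimes$ with $\pi_*$ matches the (implicit) use of the projection formula in the paper's last step, so there is no gap.
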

\begin{proof}
The proof of this fact follows along the lines of the proof of Proposition~3 in \cite{MFCH}. Indeed, let $q:\widetilde{Z}\to Z$ be the blowup of $Z=X_1\cap \cdots \cap X_{m-1}$ along $\mathcal{J}_X$, viewing $\mathcal{J}_X$ as a subscheme of $Z$, denote the exceptional divisor of the blowup by $E$ and denote the class of $q^{-1}(X)$ by $\widetilde{X}$. 

By Proposition~4.2 (a) of \cite{IT}, we have
\begin{eqnarray*}
s(X\cup \mathcal{J}^k_X,Z)&=&q_*\left(s(q^{-1}(X\cup \mathcal{J}^k_X),\widetilde{Z})\right) \\
                          &=&q_*\left(\frac{\widetilde{X}+kE}{1+\widetilde{X}+kE}\right) \\
                          &=&q_*\left((\widetilde{X}+kE)\otimes \mathscr{O}(\widetilde{X}+kE)\right),
\end{eqnarray*}
where the last equality comes from formula (\ref{tn}). Now we compute:
\begin{eqnarray*}
q_*\left((\widetilde{X}+kE)\otimes \mathscr{O}(\widetilde{X}+kE)\right)&=&q_*\left((\widetilde{X}+kE)\otimes (\mathscr{O}(kE)\otimes \mathscr{O}(\widetilde{X}))\right) \\
                                                 &=&q_*\left(\left(\widetilde{X}\otimes \mathscr{O}(kE)+kE\otimes \mathscr{O}(kE)\right)\otimes \mathscr{O}(\widetilde{X})\right) \\
																								 &=&q_*\left(\left(\frac{\widetilde{X}}{1+kE}+\frac{kE}{1+kE}\right)\otimes \mathscr{O}(\widetilde{X})\right) \\
																								 &=&q_*\left(\left(\widetilde{X}-\frac{\widetilde{X}\cdot kE}{1+kE}+\frac{kE}{1+kE}\right)\otimes \mathscr{O}(\widetilde{X})\right) \\
																								 &=&q_*\left(\left(\widetilde{X}+c\left(\mathscr{O}(-\widetilde{X})\right)\cap \frac{kE}{1+kE}\right)\otimes \mathscr{O}(\widetilde{X})\right) \\
																								 &=&q_*\left(s(\widetilde{X},\widetilde{Z})+c(\mathscr{O}(\widetilde{X}))^{-1}\cap \left(s(E,\widetilde{Z})^{(k)}\otimes \mathscr{O}(\widetilde{X})\right)\right) \\
																								 &=&s(X,Z)+c(\mathscr{O}(X))^{-1}\cap \left(s(\mathcal{J}_X,Z)^{(k)}\otimes \mathscr{O}(X)\right),
\end{eqnarray*}
where $s(\mathcal{J}_X,Z)^{(k)}$ denotes the $k$th Adams of $s(\mathcal{J}_X,Z)$. The second to last equality follows from the definition of Segre class and formula \ref{tf}, while the last equality follows from the projection formula. This proves the lemma.
\end{proof}

We now proceed with the
\begin{proof}[Proof of Theorem~\ref{mt}]
For every positive integer $k$ we may factor the embedding $X\cup \mathcal{J}^k_X\hookrightarrow M$ as $X\cup \mathcal{J}^k_X\hookrightarrow Z\hookrightarrow M$, and since $Z$ is smooth, this implies the ideal sheaf of $X\cup \mathcal{J}^k_X$ is of linear type (as defined via equation (\ref{ilt})), thus by Theorem~2 of \cite{KLI} we have
\[
s(X\cup \mathcal{J}^k_X,M)=c(N_ZM)^{-1}\cap s(X\cup \mathcal{J}^k_X,Z).
\] 
Moreover, by the birational invariance of Segre classes (Proposition~4.2 (a) of \cite{IT}), we have (recall $p$ denotes the blowup of $M$ along $X\cup \mathcal{J}_X$)
\[
s(X\cup \mathcal{J}^k_X,M)=p_*\left(\frac{\widetilde{X}+kE}{1+\widetilde{X}+kE}\right),
\]
where $E$ and $\widetilde{X}$ denote the class of the exceptional divisor and the pullback of $X$ respectively. Thus by Lemma~\ref{l1} we have 
\begin{equation}\label{e1}
p_*\left(\frac{\widetilde{X}+kE}{1+\widetilde{X}+kE}\right)=c(N_ZM)^{-1}\cap \left(s(X,Z)+c(\mathscr{O}(X))^{-1}\cap \left(s(\mathcal{J}_X,Z)^{(k)}\otimes_Z \mathscr{O}(X)\right)\right),
\end{equation}
where we use a subscript $Z$ on the tensor notation to emphasize the fact that we are tensoring the class $s(\mathcal{J}_X,Z)^{(k)}$ with respect to codimension in $Z$. Now for $k=-1$, the geometric meaning of the equation (\ref{e1}) as the Segre class of a closed subscheme of $M$ is lost, but equation (\ref{e1}) still holds nonetheless. And since the dual of a class may be suitably interpreted as its `$-1$th Adams', we have
\begin{equation}\label{e2}
p_*\left(\frac{\widetilde{X}-E}{1+\widetilde{X}-E}\right)=c(N_ZM)^{-1}\cap \left(s(X,Z)+c(\mathscr{O}(X))^{-1}\cap \left(s(\mathcal{J}_X,Z)^{\vee}\otimes_Z \mathscr{O}(X)\right)\right),
\end{equation}
where we recall $s(\mathcal{J}_X,Z)^{\vee}$ denotes the dual of $s(\mathcal{J}_X,Z)$ (\ref{dn}). Now let 
\[
\mathscr{E}=\mathscr{O}(X_1) \oplus \cdots \oplus \mathscr{O}(X_m),
\]
and note that the restriction to $Z$ of the bundles $\mathscr{O}(X_1)\oplus \cdots \oplus \mathscr{O}(X_{m-1})$ and $\mathscr{O}(X_m)$ coincide with its normal bundle $N_ZM$ and $\mathscr{O}(X)$ respectively, so that $c(N_ZM)c(\mathscr{O}(X))=c(\mathscr{E})$.
Thus
\begin{eqnarray*}
p_*\left(\frac{\widetilde{X}-E}{1+\widetilde{X}-E}\right)&=&c(N_ZM)^{-1}\cap \left(s(X,Z)+c(\mathscr{O}(X))^{-1}\cap \left(s(\mathcal{J}_X,Z)^{\vee}\otimes_Z \mathscr{O}(X)\right)\right) \\
&=&s(X,M)+c(\mathscr{E})^{-1}\cap \left(s(\mathcal{J}_X,Z)^{\vee}\otimes_Z \mathscr{O}(X)\right) \\
&=&s(X,M)+c(\mathscr{E})^{-1}\cap \left(\left(c(N_ZM)\cap s(\mathcal{J}_X,M)\right)^{\vee}\otimes_Z \mathscr{O}(X)\right) \\
&=&s(X,M)+c(\mathscr{E})^{-1}\cap \left(\left(c(N_ZM)^{\vee}\cap s(\mathcal{J}_X,M)^{\vee}\right)\otimes_Z \mathscr{O}(X)\right), \\
\end{eqnarray*}
where in the second and third equalities we used the fact that
\[
c(N_ZM)^{-1}\cap s(X,Z)=s(X,M) \quad \text{and} \quad s(\mathcal{J}_X,Z)=c(N_ZM)\cap s(\mathcal{J}_X,M)
\]
(both of which follow by Theorem~2 of \cite{KLI}), and in the fourth equality we use formula (\ref{df}). Our theorem is then proved once we show
\[
c(TM)\cap \left(s(X,M)+c(\mathscr{E})^{-1}\cap \left(\left(c(N_ZM)^{\vee}\cap s(\mathcal{J}_X,M)^{\vee}\right)\otimes_Z \mathscr{O}(X)\right)\right)=c_{\text{SM}}(X).
\]
For this, let 
\[
M(X)=c(\mathscr{E})^{-1}\cap \left(\left(c(N_ZM)^{\vee}\cap s(\mathcal{J}_X,M)^{\vee}\right)\otimes_Z \mathscr{O}(X)\right).
\]
As Theorem~1.1 of \cite{FMC} is equivalent to the statement that
\[
c_{\text{SM}}(X)-c(TM)\cap s(X,M)=c(TM)\cap \left((-1)^{m-1}\frac{c(\mathscr{E}^{\vee}\otimes \mathscr{O}(X_m))}{c(\mathscr{E})}\cap \left(s(\mathcal{J}_X,M)^{\vee}\otimes \mathscr{O}(X_m)\right)\right),
\]
showing
\[
M(X)=(-1)^{m-1}\frac{c(\mathscr{E}^{\vee}\otimes \mathscr{O}(X_m))}{c(\mathscr{E})}\cap \left(s(\mathcal{J}_X,M)^{\vee}\otimes \mathscr{O}(X_m)\right)
\] 
then finishes the proof of the theorem. Indeed,
\begin{eqnarray*}									
M(X)&=&c(\mathscr{E})^{-1}\cap \left(\left(c(N_ZM^{\vee})\cap s(\mathcal{J}_X,M)^{\vee}\right)\otimes_Z \mathscr{O}(X)\right) \\
&=&(-1)^{m-1}\frac{c(\mathscr{O}(X_m))^{m-1}}{c(\mathscr{E})}\cap \left((c(N_ZM^{\vee})\cap s(\mathcal{J}_X,M)^{\vee})\otimes_M \mathscr{O}(X_m)\right) \\
&\overset{(\ref{tf})}=&(-1)^{m-1}\frac{c(\mathscr{O}(X_m))^{m-1}}{c(\mathscr{E})}\cap \left(\frac{c(N_ZM^{\vee}\otimes \mathscr{O}(X_m))}{c(\mathscr{O}(X_m))^{m-1}}\cap (s(\mathcal{J}_X,M)^{\vee}\otimes_M \mathscr{O}(X_m))\right)\\
&=&(-1)^{m-1}\frac{c(\mathscr{E}^{\vee}\otimes \mathscr{O}(X_m))}{c(\mathscr{E})}\cap(s(\mathcal{J}_X,M)^{\vee}\otimes_M \mathscr{O}(X_m)),
\end{eqnarray*}
where in the second equality we used the fact that for all classes $\alpha$ we have (see Lemma~2.1 of \cite{FMC})
\[
\alpha\otimes_{Z} \mathscr{O}(X) =c(\mathscr{O}(X_m))^{m-1} \cap \left(\alpha\otimes_M \mathscr{O}(X_m)\right),
\]
and the factor of $(-1)^{m-1}$ appears due to the fact that we switch from taking duals in $Z$ to duals in $M$. In the last equality we used that since $\mathscr{E}=N_ZM\oplus \mathscr{O}(X_m)$, $\mathscr{E}^{\vee}\otimes \mathscr{O}(X_m)=(N_ZM^{\vee}\otimes \mathscr{O}(X_m))\oplus \mathscr{O}$, thus
\[
c(\mathscr{E}^{\vee}\otimes \mathscr{O}(X_m))=c(N_ZM^{\vee}\otimes \mathscr{O}(X_m)),
\]
which concludes the proof.
\end{proof}

\section{CSM classes via Chern-Fulton classes of $\mathfrak{f}$-schemes}\label{fs}
There are various notions of Chern class for singular varieties and schemes which coincide with the usual Chern class in the smooth case. One such class is the \emph{Chern-Fulton class}, which is defined for all closed subschemes of a smooth variety $M$ over an arbitrary field. In contrast to the CSM classes, they are easy to define. In particular, for a closed subscheme $V\hookrightarrow M$ its Chern-Fulton class $c_{\text{F}}(V)$ is given by 
\[
c_{\text{F}}(V)=c(TM)\cap s(V,M).
\] 
In Example~4.2.6 of \cite{IT}, Fulton proves his classes are intrinsic to the scheme $V$, and are thus independent of any embedding of $V$ in a smooth variety. While CSM classes generalize the Gau{\ss}-Bonnet-Chern theorem to singular varieties, Chern-Fulton classes provide a deformation-invariant extension of the Gau{\ss}-Bonnet-Chern theorem, since if $\mathcal{Z}\to \Delta$ is a family over a disk $\Delta\subset \mathbb{C}$ whose fibers are all smooth except for possibly the central fiber, then for all $t\neq 0$ in $\Delta$ we have
\[
\int_{Z_0} c_{\text{F}}(Z_0)=\chi(Z_t),
\]
where $Z_0$ denotes the central fiber of the family and $Z_t$ denotes the fiber over $t\neq 0$. Furthermore, Chern-Fulton classes are sensitive to possible non-reduced scheme structure, while it is known that the CSM class of a non-reduced scheme coincides with the CSM class of its reduced support. In spite of these differences however, the Chern-Schwartz-MacPherson class and the Chern-Fulton class are closely related. In particular, if $X$ is a complex hypersurface with isolated singularities then 
\[
\int_X c_{\text{SM}}(X)-c_{\text{F}}(X)=\sum_{x_i\in \text{Sing}(X)} \mu(x_i), 
\]
where $\mu(x_i)$ denotes the \emph{Milnor number} of the singular point $x_i\in \text{Sing}(X)$. For arbitrary $X$ the class
\[
\mathcal{M}(X)=c_{\text{SM}}(X)-c_{\text{F}}(X)
\] 
is then an invariant of the singularities of $X$ which generalizes the notion of global Milnor number to all varieties and schemes, and is referred to as the \emph{Milnor class} of $X$. In this section we take this relationship between the two classes a step further, and show how the main formula (\ref{mf1}) of Theorem~\ref{mt} may be interpreted as the Chern-Fulton class of a formal object we refer to as an $\mathfrak{f}$-\emph{scheme}. 

So let $X$ be an almost smooth complete intersection in a smooth variety $M$ (over an algebraically closed field of characteristic zero), and let $p:\widetilde{M}\to M$ denote the blowup of $M$ along the singular scheme $\mathcal{J}_X$ of $X$. We recall that for all $k>0$ Proposition~4.2 (a) of \cite{IT} implies
\[
p_*\left(\frac{\widetilde{X}+kE}{1+\widetilde{X}+kE}\right)=s(X\cup \mathcal{J}^k_X,M),
\]
where $X\cup \mathcal{J}^k_X$ denotes the closed subscheme of $M$ corresponding to the ideal sheaf $\mathscr{I}_X\cdot \mathscr{I}^k_{\mathcal{J}_X}$. It immediately follows that
\[
c_{\text{F}}(X\cup \mathcal{J}^k_X)=c(TM)\cap p_*\left(\frac{\widetilde{X}+kE}{1+\widetilde{X}+kE}\right).
\]
Now view $k$ as a parameter. For $k=0$ we have 
\[
c_{\text{F}}(X)=\left.c(TM)\cap p_*\left(\frac{\widetilde{X}+kE}{1+\widetilde{X}+kE}\right)\right|_{k=0},
\]
while Theorem~\ref{mt} amounts to the assertion
\begin{equation}\label{ae1}
c_{\text{SM}}(X)=\left.c(TM)\cap p_*\left(\frac{\widetilde{X}+kE}{1+\widetilde{X}+kE}\right)\right|_{k=-1},
\end{equation}
so that $c_{\text{SM}}(X)$, $c_{\text{F}}(X)$ and $c_{\text{F}}(X\cup \mathcal{J}^k_X)$ all correspond to evaluating a single expression at different values of $k$. Moreover, since we think of the scheme $X\cup \mathcal{J}^k_X$ as a $k$th thickening of $X$ along $\mathcal{J}_X$, the CSM class of $X$ may be interpreted formally as the Chern-Fulton class of a scheme-like object which is a `negative thickening' of $X$ along $\mathcal{J}_X$, or rather, as a geometric object associated with the `fraction' $\mathscr{I}_X\cdot \mathscr{I}^{-1}_{\mathcal{J}_X}$. We make this qualitative interpretation more precise as follows.

We first need the following
\begin{definition}
Let $\mathfrak{I}$ denote the monoid generated by quasicoherent ideal sheaves over $M$, with binary operation induced by the usual product of ideal sheaves. The group of \emph{$\mathfrak{f}$-schemes} of $M$, denoted $\mathfrak{F}(M)$, is then defined as the Grothendieck group of the monoid $\mathfrak{I}$. 
\end{definition}

\noindent \emph{Some terminology}. An $\mathfrak{f}$-scheme of the form $[\mathscr{I}_S]^{-1}$ with $S$ non-empty will be referred to as an \emph{antischeme}. An $\mathfrak{f}$-scheme $[\mathscr{I}_A]$ different from the identity is said to be a \emph{factor} of $[\mathscr{I}_S]$ if there exists a non-empty closed subscheme $B\hookrightarrow M$ such that $[\mathscr{I}_S]=[\mathscr{I}_A]\cdot [\mathscr{I}_B]$. An $\mathfrak{f}$-scheme $U\in \mathfrak{F}(M)$ is said to have \emph{support} $S\cup T$ if there exists closed subschemes $S\hookrightarrow M$ and $T\hookrightarrow M$ such that $U=[\mathscr{I}_S]\cdot [\mathscr{I}_{T}]^{-1}$ with $[\mathscr{I}_S]$ and $[\mathscr{I}_T]$ having no common factors. In such a case we will often make an abuse of notation and denote $[\mathscr{I}_S]\cdot [\mathscr{I}_{T}]^{-1}\in \mathfrak{F}(M)$ with support $S\cup T$ simply by $S\cdot T^{-1}$.
\\

We now extend the domain of Segre classes and Chern-Fulton classes to $\mathfrak{f}$-schemes via the following
\begin{definition}\label{d2}
Let $U=[\mathscr{I}_{S_1}]\cdot [\mathscr{I}_{S_2}]^{-1}\in \mathfrak{F}(M)$ be an $\mathfrak{f}$-scheme with support $S_1\cup S_2$, and let $p:\widetilde{M}\to M$ be the blowup of $M$ along $S_1\cup S_2$. Then the \emph{Segre class} of $U$ is defined via the formula
\[
s(U,M)=p_*\left(\frac{\widetilde{S_1}-\widetilde{S_2}}{1+\widetilde{S_1}-\widetilde{S_2}}\right),
\]
where $\widetilde{S_i}$ denotes the class of $p^{-1}(S_i)$. The Chern-Fulton class of $U$ is then given by
\[
c_{\text{F}}(U)=c(TM)\cap s(U,M).
\]
\end{definition}

We note that if $S_2$ in Definition~\ref{d2} is the empty subscheme of $M$ then $s(U,M)$ coincides with the usual Segre class $s(S_1,M)$, and if $S_1$ is empty then $s(U,M)=s(S_2,M)^{\vee}$. 

Now let $X$ be an almost smooth complete intersection in $M$ with singular scheme $\mathcal{J}_X$. Then the RHS of equation (\ref{ae1}) coincides with the Chern-Fulton class of the $\mathfrak{f}$-scheme $X\cdot \mathcal{J}_X^{-1}$, thus Theorem~\ref{mt} may be reformulated in the language of $\mathfrak{f}$-schemes via the formula
\begin{equation}\label{mf2}
c_{\text{SM}}(X)=c_{\text{F}}(X\cdot \mathcal{J}_X^{-1}),
\end{equation}
where we recall $X\cdot \mathcal{J}_X^{-1}$ is notation for the $\mathfrak{f}$-scheme $[\mathscr{I}_X]\cdot [\mathscr{I}_{\mathcal{J}_X}]^{-1}$. As for the Milnor class, we then have
\[
\mathcal{M}(X)=c_{\text{F}}(X\cdot \mathcal{J}_X^{-1})-c_{\text{F}}(X),
\]
so that $c_{\text{SM}}(X)$, $c_{\text{F}}(X)$ and $\mathcal{M}(X)$ may all be formulated in terms of Chern-Fulton classes of $\mathfrak{f}$-schemes. Moreover, we conjecture formula (\ref{mf2}) holds for $X$ \emph{any} closed subscheme of $M$.

\begin{remark} The language of $\mathfrak{f}$-schemes along with formula (\ref{mf2}) yields a simple proof in the hypersurface case that CSM classes are not sensitive to non-reduced scheme structure. Indeed, let $X$ be a reduced hypersurface given by the equation $F=0$ and denote its singular scheme by $\mathcal{J}_X$. Then the scheme $X^k$ corresponding to the ideal sheaf $\mathscr{I}_X^k$ is given by $F^k=0$, and $d(F^k)=kF^{k-1}dF$. Now since $\mathcal{J}_X$ corresponds to the equation $dF=0$, the singular scheme of $X^k$ corresponds to the ideal sheaf $\mathscr{I}_X^{k-1}\cdot \mathscr{I}_{\mathcal{J}_X}$. Thus
\[
c_{\text{SM}}(X^k)=c_{\text{F}}(X^k\cdot (X^{(k-1)}\cdot \mathcal{J}_X)^{-1})=c_{\text{F}}(X^k\cdot (X^{(1-k)}\cdot \mathcal{J}_X^{-1}))=c_{\text{F}}(X\cdot \mathcal{J}_X^{-1})=c_{\text{SM}}(X),
\] 
where the first and last equalities follow from formula (\ref{mf2}).
\end{remark}

We conclude with a quote from 18th century mathematician Fancis Maceres in regards to negative numbers:
\\

``Quantities marked with a minus sign darken the very whole doctrines of the equations, and make dark of the things which are in their nature excessively obvious and simple.''

\bibliographystyle{plain}
\bibliography{CSMC2}

\end{document}